\newtheorem{proposition}{Proposition}
\newcommand{\psd}{\succeq 0}
\newcommand{\nsd}{\preceq 0}
\DeclareMathOperator*{\prop}{label}
\DeclareMathOperator*{\sumop}{sum}
\newcommand{\tsum}[1]{\textrm{sum}(#1)}
\newcommand{\tsqrt}[1]{\textrm{sqrt}(#1)}
\newcommand{\texp}[1]{\textrm{exp}(#1)}
\newcommand{\tlog}[1]{\textrm{log}(#1)}
\newcommand{\tdiag}[1]{\textrm{diag}(#1)}
\newcommand{\tcosh}[1]{\textrm{cosh}(#1)}
\newcommand{\template}{\tdiag{y \odot z \odot y} - (y \odot z)(y \odot z)^\top/\tsum{z}}
\title{Convexity Certificates from Hessians}
\author{%
	Julien Klaus\\
	Friedrich Schiller University Jena\\
	\texttt{julien.klaus@uni-jena.de} \\
	\And
	Niklas Merk\\
	Friedrich Schiller University Jena\\
	\texttt{niklas.merk@uni-jena.de}
	\And
	Konstantin Wiedom\\
	Friedrich Schiller University Jena\\
	\texttt{konstantin.wiedom@uni-jena.de}
	\And
	Sören Laue\\
	Technical University of Kaiserslautern\\
	Data Assessment Solutions GmbH\\
	\texttt{laue@cs.uni-kl.de}
	\And
	Joachim Giesen\\
	Friedrich Schiller University Jena\\
	\texttt{joachim.giesen@uni-jena.de}
}
\begin{document}

	\maketitle

	\begin{abstract}
		The Hessian of a differentiable convex function is positive semidefinite. Therefore, checking the Hessian of a given function is a natural approach to certify convexity. However, implementing this approach is not straightforward since it requires a representation of the Hessian that allows its analysis. Here, we implement this approach for a class of functions that is rich enough to support classical machine learning. For this class of functions it was recently shown how to compute computational graphs of their Hessians. We show how to check these graphs for positive semidefiniteness. We compare our implementation of the Hessian approach with the well-established disciplined convex programming (DCP) approach and prove that the Hessian approach is at least as powerful as the DCP approach for differentiable functions. Furthermore, we show for a state-of-the-art implementation of the DCP approach that, for differentiable functions, the Hessian approach is  actually more powerful. That is, it can certify the convexity of a larger class of differentiable functions.  
	\end{abstract}
	
	\section{Introduction}
	
	Convex optimization forms the backbone of classical machine learning. Formulating machine learning problems as convex optimization problems is attractive because these problems can be solved globally and efficiently. Several optimization frameworks consisting of a modeling language and solver exist and are used in machine learning. Two examples of such frameworks are CVX~\citep{cvx} and GENO~\citep{LaueMG19}, which take different approaches.
	
	CVX takes a formal specification of an optimization problem and transforms it into a normal form, such as a convex quadratic program (QP) or semidefinite program (SDP). The transformation is only possible if the specified problem is convex. Therefore, a convexity test is performed first in CVX. The test is based on a calculus for convex functions called disciplined convex programming (DCP) by \citet{Grant06disciplinedconvex}, which takes advantage, for example, of the fact that the sum of convex functions and the positive scaling of a convex function are convex again. The convexity calculus requires a set of functions, called atoms, whose convexity has to be certified by other means.
	
	GENO also takes the formal specification of an optimization problem but does not transform it into standard form. Instead, it uses the specification to generate a solver for the specified problem or problem class utilizing automatic (symbolic) differentiation. Thus, solvers can also be generated for non-convex problems. However, such solvers usually do not converge to a global optimum but only provide a local optimum. Therefore, to assess the quality of a solution, a convexity certificate calculated from the specification is also of interest to GENO.
	
	For compiling a specification into a solver, GENO employs a matrix calculus~\citep{LaueMG18} for computing symbolic derivatives of matrix and tensor expressions. The matrix calculus can also be used to compute second order derivatives, that is, Hessians. A test for convexity can thus be reduced to a test of positive semidefiniteness of the Hessian, which certifies the convexity of a function. This is the approach that we take here. We design an algorithm for certifying positive semidefiniteness of matrix expressions.  For certifying convexity, the algorithm is then applied to symbolic Hessians that are computed by a matrix calculus.
	
	We start with a formally defined language of mathematical functions (see the supplementary material for its grammar). For expressions from this language, we generate computational graphs of their Hessians using a matrix calculus and check these graphs for positive semidefinitness. The computational graphs are small since they only contain symbols for parameter vectors and matrices, and not their numerical values. Our algorithm for certifying positive semidefiniteness runs in time linear in the size of the computational graph. It propagates positivity information of subexpressions to the root of the computational graph by using simple rules for positive semidefinite matrices. Non-trivial subexpressions can be certified as convex by matching them to known expression templates. Surprisingly, there is a single template that is powerful enough to certify the convexity of a large class of matrix expressions. Using this template, we show that our Hessian approach covers all differentiable functions with vector input that can be certified as convex by the DCP implementation within CVX. Furthermore, we provide classes of differentiable convex functions that can be certified by our approach, but cannot be certified as convex by CVX. 
	
	\section{Related work}
	
	In general, certifying that a function is convex is known to be strongly NP-hard~\citep{AhmadiOPT13}, even when the function is a multivariate polynomial of degree three and the domain is a bounded box~\citep{AhmadiH20}. However, since certifying convexity is such an important issue in optimization and machine learning, there are many convexity proofs for specific problems, see \citet{klibanov1997} for an example. Unfortunately, these proofs are problem- and often even instance-specific and cannot be generalized. But, there are also generic approaches that we discuss in the following. Most generic approaches are either rule-based or analyze the Hessian.
	
	\paragraph{Rule-based approaches}
	
	We have already mentioned the disciplined convex programming (DCP) approach taken by CVX~\citep{gb08,cvx,AgrawalVDB17}, which has been ported from Matlab to other programming languages, like CVXPY~\citep{DiamondB16} for Python, CVX for Julia~\citep{UdellMZHDB14}, or CVX for R~\citep{cvxr2020}. Any convex function that cannot be derived from the atomic convex functions by the DCP rule set cannot be certified as convex by the DCP approach. However, functions that have been certified as convex by other means can be added manually as atomic functions. Hence, over the course of the last few years, many convex functions have been added as atoms, for instance quadratic functions $x^\top A x$ for positive semidefinite matrices $A$, or the negative entropy function $x^\top \log(x)$. In our approach, we certify the convexity of these functions automatically.
	
	\citet{TawarmalaniS05} describe a polyhedral branch-and-cut approach for finding a global optimum for non-convex optimization problems. They use rule-based convexity tests for decomposing a non-convex problem into a set of convex problems. \citet{posypkin2021automatic} use interval arithmetic and a set of rules for determining the convexity of univariate functions, similar to the CVX approach. However, unlike CVX, their approach can only be applied to simple univariate functions. Similarly, \citet{SinghL21} analyze univariate piecewise polynomial functions. \citet{FourerMNOS10} summarize and generalize the convexity detection methods described by~\citet{SchichlN05} and~\citet{FourerO10}. Their approach, traversing a function's computational graph and applying composition rules when convexity holds, is again similar to the DCP approach.
	
	\paragraph{Hessian based approaches}
	
	\citet{camino2003matrix} use the software package NCAlgebra by~\citet{bases2000ncalgebra} for computing the Hessian of a function that is defined over matrices. Based on a symbolic Cholesky decomposition, they check the non-negativity of the eigenvalues of the Hessian. If all eigenvalues are non-negative, the Hessian is positive semidefinite and thus implies convexity. However, this approach only works for very simple functions, i.e., only polynomials of matrices and their inverses. In these cases, the Hessian is always a quadratic function for which the symbolic Cholesky decomposition can be computed. This approach works for the function $x^\top A x$, but not for the negative entropy function $x^\top \log(x)$.
	
	Using automatic differentiation for computing Hessians and then certifying convexity was proposed by~\citet{Nenov2004}. However, the proposal does not include specific algorithms and has not been implemented yet.
	
	\paragraph{Other approaches}
	
	An approach that is neither rule-based nor based on analyzing the Hessian was proposed by~\citet{CarmonDHS17}, who established a relationship between the number of iterations needed for minimizing a function and its convexity. In general, if a function is convex, convergence rates can be estimated. Hence, if during the minimization process these convergence rates are violated, then one has found a certificate that this function is not convex. However, if convergence rates are satisfied during the optimization process no statement about convexity can be made. In general, if one starts close to a local minimum, then the function looks locally like a convex function to the minimization process, while globally, it does not need to be convex. Instead of looking at the convergence rates, one could modify this approach by computing the Hessian in each iteration and checking its smallest eigenvalue. If it is negative, then non-convexity can be certified. Again, convexity cannot be asserted by such an approach.
	
	\section{The DCP and the Hessian approach}
	
	This section gives a brief high-level overview of the DCP approach and our implementation of the Hessian approach. Details are provided in the following sections. The two approaches are algorithmically similar. Both use some a priori information that is propagated through expression DAGs (directed acyclic graphs) for the function or its Hessian, respectively. They differ in the form of the a priori information and in the rules that are used for propagating the information. Noteworthy, in contrast to the Hessian approach, the DCP approach also works for non-differentiable functions.
	
	\paragraph{DCP approach.}
	
	The DCP approach comprises a rule set and a set of atomic expressions, short atoms, that are already known to be convex. It applies to expressions that are recursively build from functions, constants, and variables. The expressions can be organized in an expression DAG whose inner nodes are function symbols and whose leaves are constants and variables as shown in Figure~\ref{fig:abstract:expr:tree}.
	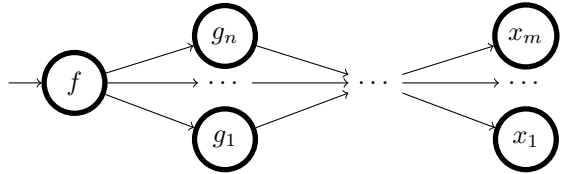
\begin{figure}[h!]
		\centering
		\begin{tikzpicture}[main/.style = {draw, circle, minimum size=23pt, line width=2pt}] 
			\node (0) at (1,2) {};
			\node[main] (1) at (2,2) {$f$}; 
			\node[main] (2) at (4,1.25) {$g_1$};
			\node (4) at (4,2) {$\dots$};
			\node[main] (5) at (4,2.625) {$g_n$};
			\node (6) at (6,2) {$\dots$};
			\node[main] (7) at (8,1.25) {$x_1$};
			\node (8) at (8,2) {$\dots$};
			\node[main] (9) at (8,2.625) {$x_m$};
			\draw [->] (0) -- (1);
			\draw [->] (1) -- (2);
			\draw [->]  (1) -- (4); 
			\draw [->]  (1) -- (5);
			\draw [->]  (2) -- (6); 
			\draw [->]  (4) -- (6); 
			\draw [->]  (5) -- (6);
			\draw [->]  (6) -- (7);
			\draw [->]  (6) -- (8);
			\draw [->]  (6) -- (9);
		\end{tikzpicture}
		\caption{Abstract expression DAG for the function $f$. The leaves $x_i$ are either variables or constants, all other nodes $g_i$ and $f$ are abstract function symbols. The root of this sub-DAG is labeled by the function symbol $f$.}
		\label{fig:abstract:expr:tree}
	\end{figure} 
	The set of function symbols includes the atoms. Function symbols can be labeled as \emph{convex}, \emph{concave}, \emph{affine}, and \emph{monotonously increasing (decreasing)}. Constants and variables can be labeled as \emph{non-negative} or \emph{non-positive}. The DCP rules are used to propagate label information from the leaves to the root of the expression DAG, which represents the whole expression. Therefore, if it is possible to propagate the label \emph{convex} to the root, then the expression is proven to be convex.
	
	\paragraph{Hessian approach.}
	
	The Hessian of a twice differentiable function $f\colon\mathbb{R}^n\to\mathbb{R}$ is a quadratic form $H(f)$, that is, for $v\in\mathbb{R}^n$ the Hessian evaluates as $v^\top H(f)v$. The function $f$ is convex if $H(f)$ is positive semidefinite. Here, we assume that we can compute $H(f)$ in vectorized form, that is, in standard matrix language that does not make use of explicit indices. Our implementation works for a formally defined language for representing multivariate functions (see the supplemental material) that allows to compute their Hessians in normalized vectorized form. By representing the Hessian by an expression DAG, the Hessian approach for computing convexity certificates propagates positivity and type information from the leaves of the DAG to the root by using a rule set that we introduce in Sections~\ref{sec:rule:sets} and~\ref{sec:hessian:approach}. 
	
	\section{Rule sets}
	\label{sec:rule:sets}
	
	In this section we show that for a twice differentiable function the DCP rule set is implied by a set of positivity rules for the Hessian of the function. However, this is not enough to show that the DCP approach is implied by a positivity calculus, since this also requires that the atoms used in the DCP approach can be certified as convex by the positivity calculus. We discuss CVX's DCP atoms in Section~\ref{sec:atoms}.  
	
	The DCP rule set by~\citet{GrantBY06} comprises the following rules for functions on $\mathbb{R}^n$:
	\begin{enumerate}
		\item $\sum_{i=1}^m \alpha_i f_i$ is convex if either $\alpha_i \geq 0$ and $f_i$ is convex, or $\alpha_i \leq 0$ and $f_i$ is concave, for all $i\in [m]$.\label{rule1}
		\item $f(g_1, g_2, \ldots, g_m)$ is convex if $f$ is convex and for each $g_i$ one of the following conditions holds:
		\begin{enumerate}
			\item $f$ is increasing in argument $i$ and $g_i$ is convex.\label{rule2a}
			\item $f$ is decreasing in argument $i$ and $g_i$ is concave.\label{rule2b}
			\item $g_i$ is affine or constant.\label{rule2c}
		\end{enumerate}
		\item $f(g_1, g_2, \ldots, g_m)$ is concave if $f$ is concave and for each $g_i$ one of the following conditions holds:
		\begin{enumerate}
			\item $f$ is increasing in argument $i$ and $g_i$ is concave.\label{rule3a}
			\item $f$ is decreasing in argument $i$ and $g_i$ is convex.\label{rule3b}
			\item $g_i$ is affine or constant.\label{rule3c}
		\end{enumerate}
		\item $f(g_1, g_2, \ldots, g_m)$ is affine if $f$ is affine and each function $g_i$ is affine.\label{rule4}
	\end{enumerate}
	
	Note that products of functions cannot be treated within the DCP framework, that is, expressions of the form $f_1\cdot f_2$. Even when $f_1$ and $f_2$ are known to be affine, nothing can be said about the product.
	
	Here, we only discuss certifying convexity, since concavity of a function $f$ can be certified by the convexity of $-f$. Hence, in the following, we do not consider the DCP Rule~3. We show that the DCP Rules~1 and~2 for twice differentiable functions are implied by positivity rules for their Hessians. DCP Rule~\ref{rule4} that asserts that $h$ is affine, which is a stronger property, can be addressed by adding the rules $0\cdot M = M\cdot 0 = 0$ and $M+0 =M$ to the positivity rules. More specifically, we have the following proposition.
	
	\begin{proposition}\label{thm:main}
		For twice differentiable functions, the DCP rule set is implied by the following positivity rules for $(n\times n)$-matrices:
		\begin{enumerate}
			\item If $c\geq 0$ and $M\psd$, then $c\cdot M\psd$.
			\item If $c\leq 0$ and $M\nsd$, then $c\cdot M\psd$.
			\item If $M_1\psd$ and $M_2\psd$, then $M_1+M_2\psd$.
			\item If $M\psd$ and $A$ is an arbitrary $(m\times n)$ matrix, then $AMA^\top \psd$.
		\end{enumerate}
	\end{proposition}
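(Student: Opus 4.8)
The plan is to pass from functions to their Hessians and to show that each DCP rule, read as an implication about convexity/concavity/affinity, becomes a derivation that uses only the four positivity rules. The bridge is the elementary fact that a twice continuously differentiable $h\colon\mathbb{R}^n\to\mathbb{R}$ is convex exactly when $H(h)(x)\psd$ for every $x$, concave exactly when $H(h)(x)\nsd$ for every $x$, and affine exactly when $H(h)(x)=0$ for every $x$. Since the positivity rules act pointwise and the DCP hypotheses on the constituent functions hold on their whole domains, it is enough to reason at a fixed but arbitrary point, and I would suppress the point from the notation. It then suffices to treat DCP Rules~\ref{rule1}, \ref{rule2a}--\ref{rule2c}, and~\ref{rule4} separately.

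For Rule~\ref{rule1}, I would use linearity of the Hessian, $H(\sum_{i=1}^m\alpha_i f_i)=\sum_{i=1}^m\alpha_i H(f_i)$. For each $i$, either $\alpha_i\geq 0$ and $H(f_i)\psd$, so positivity Rule~1 gives $\alpha_i H(f_i)\psd$, or $\alpha_i\leq 0$ and $H(f_i)\nsd$, so positivity Rule~2 gives $\alpha_i H(f_i)\psd$. Applying positivity Rule~3 a total of $m-1$ times then yields $H(\sum_i\alpha_i f_i)\psd$, that is, convexity of the sum.

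For Rule~\ref{rule2a}--\ref{rule2c}, write $h=f(g_1,\dots,g_m)$, let $J$ be the $m\times n$ Jacobian of $g=(g_1,\dots,g_m)$ (its $i$-th row is $\nabla g_i^\top$), and let $H(f)$ and $\partial_i f$ be the $m\times m$ Hessian and the $i$-th partial derivative of $f$, both evaluated at $g(x)$. The second-order chain rule gives
\[
H(h)=J^\top H(f)\,J+\sum_{i=1}^m \partial_i f\cdot H(g_i).
\]
Since $f$ is convex, $H(f)\psd$, so positivity Rule~4 applied with $A=J^\top$ (the rule of course holds for square matrices of any size) gives $J^\top H(f)\,J\psd$. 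For the $i$-th summand: in case~\ref{rule2a}, $\partial_i f\geq 0$ and $H(g_i)\psd$, so positivity Rule~1 applies; in case~\ref{rule2b}, $\partial_i f\leq 0$ and $H(g_i)\nsd$, so positivity Rule~2 applies; in case~\ref{rule2c}, $H(g_i)=0$, so the summand is the zero matrix. Repeated use of positivity Rule~3 then gives $H(h)\psd$. For Rule~\ref{rule4}, all of $f,g_1,\dots,g_m$ are affine, so $H(f)=0$ and every $H(g_i)=0$; with the extra rules $0\cdot M=M\cdot 0=0$ and $M+0=M$ the displayed formula collapses to $H(h)=0$, so $h$ is affine.

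The one nontrivial ingredient is the second-order chain rule together with the dimension bookkeeping in positivity Rule~4: one must recognize that the contribution of the outer function $f$ to the composite Hessian is precisely a congruence transformation $J^\top H(f)J$ of the Hessian of $f$, while the contributions of the inner functions are scalar multiples of the $H(g_i)$ with the scalars $\partial_i f$ carrying exactly the sign information that DCP encodes as monotonicity of $f$ in its arguments. Once that formula is written down, matching its three parts against positivity Rules~1--4 is immediate. A small point I would double-check is that DCP's monotonicity of $f$ in argument $i$ is needed only on the range of $g_i$, which is exactly the set on which $\partial_i f$ is evaluated above, so the sign conditions used there are legitimate.
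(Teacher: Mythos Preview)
Your proof is correct and follows essentially the same approach as the paper: both derive DCP Rule~\ref{rule1} from linearity of the Hessian together with positivity Rules~1--3, and both handle DCP Rule~2 and Rule~\ref{rule4} via the second-order chain rule $H(h)=J^\top H(f)J+\sum_i \partial_i f\cdot H(g_i)$, identifying the first term as a congruence transformation (positivity Rule~4) and the second as a sign-controlled sum (positivity Rules~1--3). You are in fact slightly more explicit than the paper, which details only case~\ref{rule2a} and leaves \ref{rule2b}, \ref{rule2c}, and \ref{rule4} to the reader.
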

	\begin{proof}
		We exploit that a twice differentiable function $h$ is convex if its Hessian $H(h)$ is positive semidefinite (psd) and that it is concave if its Hessian $H(h)$ is negative semidefinite (nsd). For DCP Rule~\ref{rule1}, we observe that $H(f_i)\psd$ if $f_i$ is convex, and $H(f_i)\nsd$ if $f_i$ is concave. Hence, by Positivity Rules~1 and~2, 
		\[
		H(\alpha_i f_i) = \alpha_i H(f_i) \psd
		\]
		for all $i\in [m]$, and then by Positivity Rule~3,
		\[
		H\Big( \sum_{i=1}^m \alpha_i f_i\Big) = \sum_{i=1}^m \alpha_i H(f_i) \psd,
		\]
		which certifies the convexity of $\sum_{i=1}^m \alpha_i f_i$. 
		
		For the remaining DCP Rules~2 and~\ref{rule4}, let $h$ be recursively defined as $h = f(g_1, g_2, \ldots, g_m)$. The gradient of $h$ at $x$ can be written as
		\[
		\nabla (h) = \sum_{i=1}^m \nabla (f)_i (g_1, g_2, \ldots, g_m)\cdot \nabla (g_i)
		\]
		and its Hessian is given as
		\[
		H(h) = \sum_{i,j=1}^m H(f)_{ij}(g_1, g_2, \ldots, g_m)\cdot \nabla (g_i)\nabla (g_j)^\top + \sum_{i=1}^m \nabla (f)_i(g_1, g_2, \ldots, g_m)\cdot H(g_i).
		\]
		For DCP Rule~\ref{rule2a}, we first argue separately that both sums for $H(h)$ are psd. The first sum can be written as $GH(f)G^\top$, where the columns of $G$ are the gradients $\nabla(g_i)$. Since the convexity of $f$ implies that $H(f)\psd$ it follows from Positivity Rule~4 that the first sum for $H(h)$ is psd. For the second sum, we use that $f$ is increasing in its $i$-th argument, which implies that $\nabla (f)_i\geq 0$, and that the $g_i$ are convex, which implies that $H(g_i)\psd$. Hence, by Positivity Rule~1 all terms in the second sum are psd, which together with Positivity Rule~3 implies that also the second sum for $H(h)$ is psd. Thus, we can conclude from Positivity Rule~3 that $H(h)\psd$, which certifies the convexity of $h$. The claims for DCP Rules~\ref{rule2b},~\ref{rule2c} and~\ref{rule4} follow similarly.
	\end{proof}
	
	\section{Implementing the Hessian approach}
	\label{sec:hessian:approach}
	
	For implementing the Hessian approach, we need to compute a representation of the Hessian that is amenable to analysis. The matrix calculus by~\citet{LaueMG18, LaueMG20} computes derivatives of vectorized expressions again in vectorized form without explicit indices. Here, we employ computational graphs for the expressions of second derivatives. That is, Hessians, computed by the matrix calculus. We normalize these computational graphs into expression DAGs (directed acyclic graphs) that contain each subexpression exactly once, see Figures~\ref{fig:ols:Hessian},~\ref{fig:ols:function} and \ref{fig:logistic} for examples of normalized expression DAGs. 
	
	In the following, we demonstrate our implementation of the Hessian approach using illustrative examples before we summarize the algorithm that underlies our implementation of the Hessian approach.
	
	\subsection{A generic example}
	
	As a first generic example we discuss the ordinary least squares regression problem
	\[
	\min_w\, \| Xw -y\|_2^2 \,=\, \min_w\, (Xw -y)^\top (Xw -y), 
	\]
	where $X\in\mathbb{R}^{m\times n}$ is a data matrix, $y\in\mathbb{R}^m$ is a label vector, and $w\in\mathbb{R}^n$ is the parameter vector that needs to be optimized. Using matrix calculus, the Hessian for the objective function of this problem can be computed in vectorized form as $2\cdot X^\top X$. An expression DAG for the Hessian is shown in Figure~\ref{fig:ols:Hessian}. 
	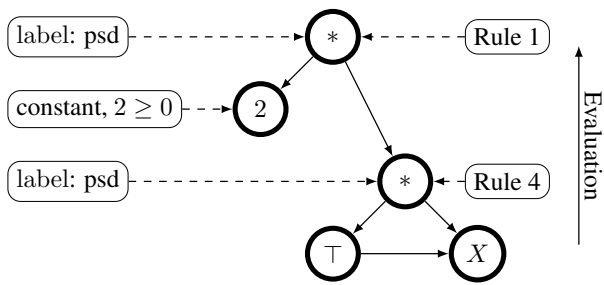
\begin{figure}[ht!]
		\centering
		\resizebox{0.58\textwidth}{!}{%
			\begin{tikzpicture}[main/.style={draw, circle, minimum size=20pt, line width=2pt}, m/.style={-latex, line width=0.5pt}] 
				\node[main] (1) at (2,0) {$*$};
				\node[main] (2) at (1,-1) {$2$};
				\node[main] (3) at (3,-2) {$*$}; 
				\node[main] (4) at (2,-3) {$\top$};
				\node[main] (5) at (4,-3) {$X$};
				\node[rectangle, rounded corners=5, draw, anchor=east] (6) at (5, -2) {Rule~4};
				\node[rectangle, rounded corners=5, draw, anchor=east] (7) at (5, 0) {Rule~1};
				\node[rectangle, rounded corners=5, draw, anchor=west] (8) at (-2.5, -1) {constant, $2\geq 0$};
				\node[rectangle, rounded corners=5, draw, anchor=west] (10) at (-2.5, 0) {$\prop$: psd};
				\node[rectangle, rounded corners=5, draw, anchor=west] (11) at (-2.5, -2) {$\prop$: psd};
				
				\node (12) at (5.4, 0) {};
				\node (13) at (5.4, -3) {};
				\node[rotate=-90] at (5.6, -1.5) {Evaluation};
				\draw[->, m] (13) -- (12);
				
				\draw[->, dashed, m] (6) -- (3);
				\draw[->, dashed, m] (7) -- (1);
				\draw[->, dashed, m] (8) -- (2);
				\draw[->, dashed, m] (10) -- (1);
				\draw[->, dashed, m] (11) -- (3);
				
				\draw[->, m] (1) -- (2);
				\draw[->, m] (1) -- (3); 
				\draw[->, m] (3) -- (4);
				\draw[->, m] (3) -- (5);
				\draw[->, m] (4) -- (5);
			\end{tikzpicture}	
		}
		\caption{Illustration of the Hessian approach on the normalized expression DAG for the Hessian of the ordinary least squares loss function.}
		\label{fig:ols:Hessian}
	\end{figure}
	Our implementation of the Hessian approach computes positivity information for each node of the DAG in a bottom-up strategy from the leaves to the root. There are positivity rules that apply to the two multiplication nodes of the DAG, namely first Rule~4 for the expression $X^\top X$ and then Rule~1 for the expression $2\cdot X^\top X$, where it is already known that $X^\top X$ is psd. 
	
	Let us compare the Hessian approach to the DCP approach for certifying the convexity of the ordinary least squares problem. The DAG for the ordinary least squares objective function is shown in Figure~\ref{fig:ols:function}. The DCP approach traverses this DAG in a bottom-up fashion. Starting from the leaves, the matrix-vector product $Xw$ is affine by definition and thus the corresponding multiplication node is labeled affine, similarly the subtraction node for $Xw-y$  is labeled affine, since the addition/subtraction of affine functions is affine again, and the transposition node in $(Xw-y)^\top$ is labeled affine, since the transposition preserves this property. However, in general nothing can be said about the product of affine functions, which means that the standard DCP rules are not enough to label the root of the DAG. Typically, the problem is dealt with by adding a new atomic function to the DCP atoms that squares its input.
	\begin{figure}[ht!]
		\centering
		\begin{tikzpicture}[main/.style={draw, circle, minimum size=20pt, line width=2pt}, m/.style={-latex, line width=0.5pt}] 
			\node[main] (1) at (2,0) {$*$};
			\node[main] (2) at (1,-1) {$\top$};
			\node[main] (3) at (3,-2) {$-$}; 
			\node[main] (4) at (2,-3) {$*$};
			\node[main] (5) at (1,-4) {$X$};
			\node[main] (6) at (3,-4) {$w$};
			\node[main] (7) at (4,-3) {$y$};
			
			\draw[->, m] (1) -- (2);
			\draw[->, m] (1) -- (3);
			\draw[->, m] (2) -- (3);
			\draw[->, m] (3) -- (4);
			\draw[->, m] (3) -- (7);
			\draw[->, m] (4) -- (5);
			\draw[->, m] (4) -- (6);	
			
			\node[rectangle, rounded corners=5, draw, anchor=west] (11) at (-2.5, -3) {$\prop$:  affine};
			\node[rectangle, rounded corners=5, draw, anchor=west] (12) at (-2.5, -2) {$\prop$:  affine};
			\node[rectangle, rounded corners=5, draw, anchor=west] (13) at (-2.5, -0) {$\prop$:  ?};
			\node[rectangle, rounded corners=5, draw, anchor=west] (14) at (-2.5, -1) {$\prop$:  affine};
			\node[rectangle, rounded corners=5, draw, anchor=west] (15) at (4, -4) {Variable};
			
			\draw[->, dashed, m] (11) -- (4);
			\draw[->, dashed, m] (12) -- (3);
			\draw[->, dashed, m] (13) -- (1);
			\draw[->, dashed, m] (14) -- (2);
			\draw[->, dashed, m] (15) -- (6);
			
			\node (20) at (5.6, 0) {};
			\node (21) at (5.6, -4) {};
			\node[rotate=-90] at (5.8, -2) {Evaluation};
			\draw[->, m] (21) -- (20);
		\end{tikzpicture}	
		\caption{Illustration of the DCP approach on the normalized expression DAG for the ordinary least squares loss function.}
		\label{fig:ols:function}
	\end{figure}
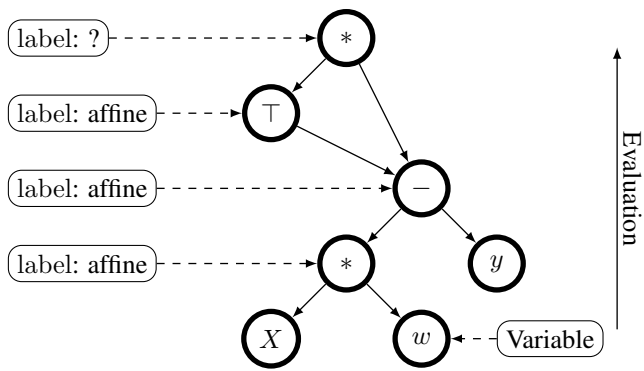    
	
	\subsection{Propagating information about subexpressions}
	
	A second instructive example is the univariate function $f(x)=x\log (x)$ that neither the Hessian nor the DCP approach can certify as convex without information beyond the rules. However, the information required by the Hessian approach is easier to provide on the language level. The Hessian of $f(x)$ is the expression $1/x$, which, in general, can be positive, negative, or even undefined at $0$. However, we already know that $x\in (0,\infty)$, because the domain of the logarithm is the positive reals. This information is enough to decide the positivity of the Hessian. Hence, we can exploit positivity information about the elementary functions that are supported by our formal language. Here are some additional rules
	\[
	\log(x) \,\Rightarrow\, x\in(0,\infty),\, \sqrt x \,\Rightarrow\, x\in[0,\infty),
	\]
	and $\exp(x), \arccos(x), \log(1+x), \|x\|\,\geq\, 0$.
	
	Let us compare this again to the DCP approach that directly works on the expression $x\log(x)$. Here, the problem is at the root node that multiplies the affine function $x$ with the concave function $\log(x)$. Since, in general, nothing can be said about the product of an affine function and a concave function, it is not possible to certify the convexity of $x\log(x)$ from the DCP rules. Indeed, CVX adds the entropy function $-x^\top \log(x)$ as an atom that has been certified as concave by other means like analyzing its second order derivative.
	
	Another instructive example is the univariate function $\log (1+\exp(x))$ whose Hessian is
	\[
	\frac{\exp(x)}{1+\exp(x)}\left(1-\frac{\exp(x)}{1+\exp(x)}\right).
	\]
	The Hessian can be certified positive definite by propagating the known positivity information for the exponential function through the normalized expression DAG for the Hessian, as can be seen in Figure~\ref{fig:logistic}. CVX adds this function as an atom named \texttt{logistic}, probably because the derivative of this function is the logistic function $1/(1+\exp(-x))$. Note that the naming problem does not exist in the Hessian approach.
	
	A fourth example is the power function $x^p$. The Hessian of the power function is $p(p-1)x^{p-2}$. Since $p(p-1)\geq 0$ for $p\geq 1$ and $p\leq 0$ we can certify the convexity of the power function from its Hessian if $p=2n, n\in\mathbb{N}$, or $\big((p\geq 1) \vee (p< 0)\big ) \wedge (x>0)$, or $\big( (p=1)\vee (p=2k,k\in \mathbb{Z}) \big) \wedge (x<0)$. Furthermore, we can certify the convexity of $-x^p$ from its Hessian if $(0<p<1)\wedge (x\geq 0)$. That is, the Hessian approach can certify the convexity of power functions from constraints on $x$ and information about the power parameter $p$, whereas the DCP approach needs atoms for the different cases.
	
	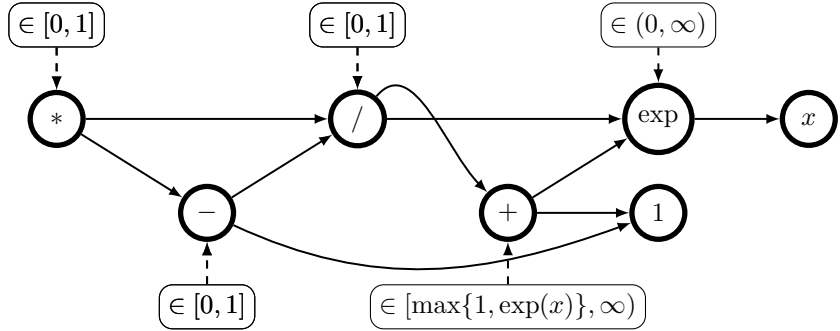
\begin{figure}[ht!]
		\centering
		\begin{tikzpicture}[main/.style={draw, circle, minimum size=20pt, line width=2pt}, info/.style={rectangle, rounded corners=5, draw}, n/.style={-latex, line width=0.75pt}, d/.style={-latex, line width=0.75pt, dashed}] 
			\node[main] (1) at (2,2.5) {$*$}; 
			\node[main] (2) at (4,1.25) {$-$};
			\node[main] (3) at (6,2.5) {$/$};
			\node[main] (4) at (8,1.25) {$+$};
			\node[main] (5) at (10,2.5) {$\exp$};
			\node[main] (6) at (10,1.25) {$1$};
			\node[main] (7) at (12,2.5) {$x$};
			\draw [->, n] (1) -- (2);
			\draw [->, n] (1) -- (3);
			\draw [->, n] (2) -- (3);
			\draw [->, n] (3) to[out=50] (4);
			\draw [->, n] (3) -- (5);
			\draw [->, n] (4) -- (5);
			\draw [->, n] (5) -- (7);
			\draw [->, n] (4) -- (6);
			\draw [->, n] (2) to[out=-25, in=200] (6);
			\node[info] (10) at (10, 3.75) {$\in (0,\infty)$};
			\draw [->, d] (10) -- (5);
			\node[info] (11) at (8, -0) {$\in [\max\{1, \exp(x)\}, \infty)$};
			\draw [->, d] (11) -- (4);
			\node[info] (12) at (6, 3.75) {$\in [0,1]$};
			\draw [->, d] (12) -- (3);
			\node[info] (13) at (4, -0) {$\in [0, 1]$};
			\draw [->, d] (13) -- (2);
			\node[info] (14) at (2, 3.75) {$\in [0, 1]$};
			\draw [->, d] (14) -- (1);
			\node[info] (12) at (6, 3.75) {$\in [0,1]$};
			\draw [->, d] (12) -- (3);
			\node[info] (13) at (4, -0) {$\in [0, 1]$};
			\draw [->, d] (13) -- (2);
			\node[info] (14) at (2, 3.75) {$\in [0, 1]$};
			\draw [->, d] (14) -- (1);
		\end{tikzpicture}
		\caption{Propagating positivity information from the leaves to the root of the normalized expression DAG for the Hessian of the logistic function.}
		\label{fig:logistic}
	\end{figure}
	
	In general, the Hessian approach can exploit derived and user provided information about variables, parameters, and, more generally, subexpressions. However, the next example shows that this is not enough. 
	
	\subsection{A psd expression template}
	
	Our fifth example is the CVX atom \texttt{log\_sum\_exp} that computes $\textrm{log}\big(\textrm{sum}(\textrm{exp} (x))\big)$. Its Hessian is given, again in vectorized form, as
	\[
	\Big(\textrm{diag}\big(\textrm{exp}(x)\big) - \textrm{exp}(x)\textrm{exp}(x)^\top /\textrm{sum}\big(\textrm{exp}(x)\big)\Big)/\textrm{sum}\big(\textrm{exp}(x)\big).
	\]
	\begin{minipage}{0.6\textwidth}
		Both the first and the second term are readily certified as positive, but their difference is not, because in general, nothing can be said about the difference of two psd matrices. However, we will show that the Hessian  matches the psd expression template from Proposition~\ref{prop:template}, see also the Figure to the right. The template can be used to certify many expressions as convex. Whenever a subexpression of a larger expression DAG matches the template, we can label  the subexpression DAG as psd and propagate this information within the larger DAG. Indeed, adding this template to our rule set is powerful enough to cover all differentiable CVX atoms with vector input, and thus by Proposition~\ref{thm:main} all differentiable functions with vector input, that can be certified as convex by the DCP implementation within CVX.
	\end{minipage}
	\quad
	\begin{minipage}{0.35\textwidth}
		\begin{tikzpicture}[main/.style={draw, circle, minimum size=20pt, line width=2pt}, m/.style={-latex, line width=0.5pt}] 
			\node[main] (1) at (2,0) {$-$};
			\node[main] (2) at (0,-1) {$\operatorname{diag}$};
			\node[main] (4) at (4,-1) {$/$};
			\node[main] (3) at (3,-2) {$\top$}; 
			\node[main] (5) at (2,-1) {$*$};
			\node[main] (6) at (2,-3) {$\odot$};
			\node[main] (7) at (0,-3) {$\odot$};
			\node[main] (8) at (3,-4) {$z$};
			\node[main] (9) at (1,-4) {$y$};
			\node[main] (10) at (4,-3) {$\sumop$};
			\node[rectangle, rounded corners=5, draw, anchor=east] (11) at (4.5, 0) {$\prop$:  psd};
			\draw[->, dashed, m] (11) -- (1);
			\draw[->, m] (1) -- (2);
			\draw[->, m] (1) -- (4);
			\draw[->, m] (4) -- (5);
			\draw[->, m] (4) -- (10);
			\draw[->, m] (5) -- (3);
			\draw[->, m] (5) -- (6);
			\draw[->, m] (3) -- (6);
			\draw[->, m] (2) -- (7);
			\draw[->, m] (7) -- (9);
			\draw[->, m] (7) -- (6);
			\draw[->, m] (6) -- (8);
			\draw[->, m] (6) -- (9);
			\draw[->, m] (10) -- (8);
		\end{tikzpicture}	
	\end{minipage}
	
	\begin{proposition}
		\label{prop:template}
		For $y\in \mathbb{R}^n$ and $z\in\mathbb{R}^n_{\geq 0}$, all matrices of the following form are psd:
		\[
		\textrm{diag}(y\odot z\odot y) - (y\odot z)(y\odot z)^\top /\textrm{sum}(z).
		\]
	\end{proposition}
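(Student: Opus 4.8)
The plan is to peel off the diagonal $y$-scaling, reducing the claim to the well-known positive semidefiniteness of the log-sum-exp Hessian pattern $\textrm{diag}(z)-zz^\top/\textrm{sum}(z)$, and then to establish that pattern by a one-line Cauchy--Schwarz estimate.

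\emph{Step 1 (degenerate case).} Since $z\in\mathbb{R}^n_{\geq 0}$, the equality $\textrm{sum}(z)=0$ forces $z=0$, and then the matrix in question is the zero matrix (reading the second summand as $0$), which is psd. So from now on I assume $s:=\textrm{sum}(z)>0$.

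\emph{Step 2 (factor out $y$).} Writing $D_y:=\textrm{diag}(y)$ and using $(y\odot z\odot y)_i=y_i z_i y_i$ together with $(y\odot z)_i=y_i z_i$, one verifies the entrywise identity
\[
\textrm{diag}(y\odot z\odot y)-(y\odot z)(y\odot z)^\top/s \;=\; D_y\Big(\textrm{diag}(z)-zz^\top/s\Big)D_y .
\]
As $D_y^\top=D_y$, Positivity Rule~4 of Proposition~\ref{thm:main} shows it is enough to prove that $N:=\textrm{diag}(z)-zz^\top/s$ is psd.

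\emph{Step 3 (Cauchy--Schwarz).} For an arbitrary $w\in\mathbb{R}^n$,
\[
w^\top N w \;=\; \sum_{i=1}^n z_i w_i^2 \;-\; \frac{1}{s}\Big(\sum_{i=1}^n z_i w_i\Big)^2 .
\]
Put $a_i:=\sqrt{z_i}\,w_i$ and $b_i:=\sqrt{z_i}$, which is legitimate because $z_i\geq 0$. Then $\sum_i a_i^2=\sum_i z_i w_i^2$, $\sum_i b_i^2=s$, and $\sum_i a_i b_i=\sum_i z_i w_i$, so Cauchy--Schwarz, $(\sum_i a_i b_i)^2\leq(\sum_i a_i^2)(\sum_i b_i^2)$, yields $(\sum_i z_i w_i)^2\leq s\sum_i z_i w_i^2$, that is, $w^\top N w\geq 0$. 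Hence $N\psd$, and by Step~2 the matrix of Proposition~\ref{prop:template} is psd. (Equivalently, $w^\top N w=s\,\mathrm{Var}_p(w)$ with $p_i=z_i/s$ a probability vector, which is visibly nonnegative.)

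The argument is entirely elementary, so there is no real obstacle; the only points requiring a moment's care are how the expression is to be read when $\textrm{sum}(z)=0$ and the choice of vectors in the Cauchy--Schwarz step. If one prefers to skip the factorization in Step~2, the very same Cauchy--Schwarz estimate with $a_i:=v_i y_i\sqrt{z_i}$ and $b_i:=\sqrt{z_i}$ proves $v^\top M v\geq 0$ directly for the full matrix $M$ and arbitrary $v\in\mathbb{R}^n$.
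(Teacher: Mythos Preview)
Your proof is correct and follows essentially the same route as the paper's: both reduce to the nonnegativity of $\sum_i z_i u_i^2 - (\sum_i z_i u_i)^2/\textrm{sum}(z)$ with $u_i=y_iv_i$, which the paper reads as $\textrm{sum}(z)$ times a variance and you obtain via Cauchy--Schwarz (and then note the variance interpretation yourself). The factorization $D_y N D_y$ together with Positivity Rule~4 and the explicit handling of the $\textrm{sum}(z)=0$ case are minor presentational additions, not a different argument.
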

	\begin{proof}
		We have to show that $v^\top Av \geq 0$ for all $v\in\mathbb{R}^n$ for the template matrix $A$. It turns out that $v^\top Av$ is the variance of some random variable. We compute that
		\begin{align*}
			&v^\top (\textrm{diag}(y\odot z\odot y) - (y\odot z)(y\odot z)^\top /\textrm{sum}(z))v \\
			&\qquad\qquad= \textrm{sum}(z\odot (y\odot v)^2) - \textrm{sum}(z\odot (y\odot v))^2 / \textrm{sum}(z) \\
			&\qquad\qquad = \textrm{sum}(z)\cdot \Big( \textrm{sum}\big((z/\textrm{sum}(z))\odot (y\odot v)^2\big) -  \textrm{sum}\big((z/\textrm{sum}(z))\odot y\odot v\big)^2 \Big) \\
			&\qquad\qquad = \textrm{sum}(z)\cdot \textrm{var}(V_y) \geq 0,
		\end{align*}
		where $\textrm{var}(V_y)$ is the variance of the random variable $V_y$ that
		takes the value $y_iv_i$ with probability $z_i/\textrm{sum}(z)$ for $i\in [n]$.
	\end{proof}
	
	Note that the Hessian of the CVX atom \texttt{log\_sum\_exp} matches the template if $z$ is instantiated by $\textrm{exp}(x)/\textrm{sum}(\textrm{exp(x)})$ and $y$ by $\textrm{vector}(1)$.
	
	\subsection{Summary of the Hessian approach}
	
	Algorithm~\ref{alg:algorithm} that implements the Hessian approach combines the individual steps mentioned above. Its input is the Hessian of a given expression computed as a normalized expression DAG. In the algorithm, we encode positivity and negativity information by intervals as follows: for scalar expressions, the interval encodes (a superset of) the domain of the expression, for vector expressions, the interval encodes (a superset of) the domains of all vector entries, and for matrix expression, any interval in $[0,\infty)$ encodes psd information and any interval in $(-\infty,0]$ encodes nsd information.
	
	\begin{algorithm}[h!]
		\caption{Certify convexity of a Hessian}
		\label{alg:algorithm}
		\begin{algorithmic}[1] 
			\STATE $v \leftarrow$ \textsc{rootOf}(DAG)
			\STATE $I_v \leftarrow$ \textsc{determineInterval}($v$)
			\IF {$I_v\subseteq [0,\infty)$}
			\STATE \textbf{return}\, \texttt{true}
			\ELSE
			\STATE \textbf{return}\, \texttt{false}
			\ENDIF
		\end{algorithmic}
	\end{algorithm} 
	
	The main work in Algorithm~\ref{alg:algorithm} is delegated to the subroutine \textsc{DetermineInterval} that is implemented in Algorithm~\ref{alg:subroutine2}. The subroutine recursively processes the normalized expression sub-DAG rooted at some vertex $v$ from the leaves to the node $v$. The intervals $I_v$ at leaf nodes always either encode known or user-provided positivity information about variables or parameters or are set to $(-\infty,\infty)$. At any node, the information about the intervals associated with its children is combined, using the psd rule set and standard interval arithmetic, into an interval for the node by the subroutine \textsc{CombineIntervals}. Exceptions from this rule are multiplication nodes at which Rule~4 might apply and subtraction nodes, where the psd template from Proposition~\ref{prop:template} might apply. 
	Therefore, we check at each node if a simple template for Rule~4 or our psd template applies. 
	Matching the templates are simple instances of tree matching problems that are implemented in the subroutine \textsc{matchTemplate}. 
	Here, we encode a match with the templates by the interval $[0,\infty)$.
	
	\begin{algorithm}[h!]
		\caption{Compute the positivity interval for a node $v$}
		\label{alg:subroutine2}
		\begin{algorithmic}[1] 
			\STATE \textbf{procedure} \textsc{DetermineInterval}($v$)
			\IF {$v$.leaf = \texttt{true}}
			\STATE \textbf{return} $I_v$
			\ENDIF
			\IF {\textsc{matchTemplate}($v$)}
			\STATE \textbf{return} $[0,\infty)$
			\ELSE
			\STATE $I_l\leftarrow$ \textsc{determineInterval}(\textsc{leftChild}($v$))
			\STATE $I_l\leftarrow$ \textsc{determineInterval}(\textsc{rightChild}($v$))
			\STATE \textbf{return} \textsc{combineIntervals}($I_l, I_r$)
			\ENDIF
			\STATE \textbf{end procedure} 
		\end{algorithmic}
	\end{algorithm}
	
	Since the psd expression template and the rules from Proposition~\ref{thm:main} only require checking substructures of constant depth (up to depth five for the psd template), the running time of the algorithm is linear in the size of the expression DAG of the Hessian.
	
	\section{Atoms of CVX's implementation of the DCP approach}
	\label{sec:atoms}
	
	In addition to the rule set, the DCP approach requires functions (atoms) that are already known to be convex. Here, we show that all twice differentiable atoms with vector input that can be certified as convex by the DCP implementation within CVX can also be certified by our implementation of the Hessian approach. In the next section we provide examples of convex functions that are not certified as convex by CVX, but can be certified by our Hessian approach.
	
	Among CVX's DCP atoms are standard convex univariate functions like \texttt{exp}, \texttt{neg\_log}, \texttt{neg\_sqrt}, and \texttt{square} that we discuss in the supplemental material. Multivariate DCP atoms in CVX are \texttt{sum}$(x)$, that is, the function $\sum_{i=1}^n x_i$, and quadratic forms \texttt{quad\_form}$(x,A)$, that is, $x^\top Ax$ for a psd matrix $A$. Both functions are readily certified as convex by their Hessians. A more interesting atom is \texttt{inv\_prod}$(x)$ that computes $\big(\prod_{i=1}^n x_i\big)^{-1}$. In vectorized notation this function can be written as $f(x) = 1/\textrm{exp}\big(\textrm{sum}(\textrm{log} (x))\big)$ and its vectorized Hessian is given as
	\[
	f(x)\cdot \big(\textrm{vector}(1)\oslash x) (\textrm{vector}(1)\oslash x)^\top +\,\textrm{diag}(\textrm{vector}(f(x))\oslash (x\odot x)\big),
	\]
	where $\textrm{vector}(1)$ is the all ones vector, $\odot$ and $\oslash$ denote elementwise multiplication and divison, respectively. It follows from the positivity of $f(x)$ and Positivity Rules~1 and~4 that the first term in the sum for the Hessian is psd, and from the positivity of $f(x)$ and the positivity of the entries of $x\odot x$ it follows that also the second term is psd. Hence, the Hessian is psd by Positivity Rule~3, which certifies the convexity of $f(x)$.
	
	CVX also contains four atoms (combinations of the operators sum, log, and exp) that superficially look similar to \texttt{inv\_prod} but cannot be certified directly from the positivity calculus. However, for these problems, the Hessian is the difference of two psd matrices that can be matched to the template expression from Proposition~\ref{prop:template}. We have already discussed the CVX atom \texttt{log\_sum\_exp}. The three remaining atoms are the harmonic mean, $p$-norms, and the geometric mean. 
	
	\paragraph{Negative harmonic mean}
	
	The negative harmonic mean \texttt{neg\_harmonic\_mean}$(x)$ for $x\in \mathbb{R}^n_+$ is defined as $-n/\sum_{i=1}^n x_i^{-1}$. It can be written in vectorized notation as
	\[
	-1/\textrm{sum}\big(\textrm{vector}(1)\oslash x)\big) =: f(x).
	\]
	Its Hessian, which computes to
	\[
	2\cdot f(x)^2\Big( \textrm{diag}\big(\textrm{vector}(1)\oslash (x\odot x\odot x)\big) + f(x) \cdot\; (\textrm{vector}(1)\oslash (x\odot x))(\textrm{vector}(1)\oslash(x\odot x))^\top \Big),
	\]
	is matched by the psd expression template if both $y$ and $z$ are instantiated by $\textrm{vector}(1)\oslash x$. Note that $1/\textrm{sum}(z)$ in the template becomes $-f(x) = 1/\textrm{sum}\big(\textrm{vector}(1)\oslash x)\big)$ in the instantiation.
	
	\paragraph{$p$-norms}
	
	The $p$-norm $\|x\|_p =\Big( \sum_{i=1}^n |x_i|^p \Big)^{1/p}$ of $x\in\mathbb{R}^n$ for $p>1$ reads in vectorized notation as
	\[
	\textrm{sum}\big(\textrm{exp}(p\cdot \textrm{log}(s(x)\odot x))\big)^{1/p} = \textrm{sum}(f(x))^{1/p},
	\]
	where $s(x)$ is the sign vector of $x$, that is, we have $s(x)\odot s(x) = s(x)\oslash s(x) = \textrm{vector}(1)$. The Hessian of the $p$-norm can be computed as 
	\[
	(p-1)\cdot\textrm{sum}((f(x))^{1/p-1}\cdot \textrm{diag}(f(x)\oslash (x\odot x)) - (p-1)\cdot\textrm{sum}(f(x))^{1/p-2}\cdot (f(x)\oslash x)(f(x)\oslash x)^\top.
	\]
	The Hessian matches the psd expression template if $y$ is instantiated by $\textrm{vector}(1)\oslash x$ and $z$ by $f(x)$. It also follows that the $p$-norm is concave for $0<p<1$, since the negated $p$-norm is convex for these values of $p$.
	
	\paragraph{Negative geometric mean}
	
	The negative geometric mean \texttt{neg\_geo\_mean}$(x,p)$ is given as
	\[
	-\left(\prod_{i=1}^n x_i^{p_i}\right)^{1/\textrm{sum}(p)},
	\]
	where $p\in\mathbb{R}^n_+$ is a parameter vector. It reads in vectorized form as
	\[
	f(x) = -\textrm{exp}(\textrm{sum}(p\odot \textrm{log}(x)))^{1/\textrm{sum}(p)} \:\: \leq 0.
	\]
	Its Hessian 
	\[
	f(x)\cdot \Big( (p\oslash x)(p\oslash x)^\top /\textrm{sum}(p)^2 - \textrm{diag}\big((p\oslash x\oslash x)/\textrm{sum}(p)\big)\Big),
	\]
	matches the psd expression template if $y$ is instantiated by $p$ and $z$ is instantiated by $\textrm{vector}(1)\oslash x$.
	
	\section{Beyond CVX's atoms}
	\label{sec:examples}
	
	There are two main classes of functions that cannot be treated by DCP. The first are products of two non-constant expressions such as $x\exp(x)$ for $x\geq 0$. The second are compositions that are not following DCP's Rules~2, 3 or~4 such as $\texttt{neg\_entr}(\cosh(x))=\cosh(x)\log(\cosh(x))$, where $\texttt{neg\_entr}(x) = x\log(x)$ is convex but not increasing, and thus composing it with the convex function $\cosh(x)$, DCP Rule~\ref{rule2a} does not apply here. Both examples so far do not need the template expression from Proposition~\ref{prop:template} to be certified convex by the Hessian approach. This is different for the multivariate function
	\[
	\Big(\sum_{i=1}^n \exp(x_i)\Big)\log\Big(1+\sum_{i=1}^n \exp(x_i)\Big),
	\]
	which can be expressed in vectorized form as
	\[
	\textrm{sum}\big(\textrm{exp}(x)\big)\textrm{log}\big(1+\textrm{sum}\big(\textrm{exp}(x)\big)\big).
	\]
	Its Hessian is the sum of three matrices 
	
	
	\begin{align*}
		&\quad\left(\textrm{log}\big(1+\tsum{\texp{x}}\big) \,+\, \frac{\tsum{\texp{x}}}{(1+\tsum{\texp{x}})^2} \right) \cdot \; \tdiag{\texp{x}} + \frac{2\cdot \texp{x}\texp{x}^\top}{1+\tsum{\texp{x}}}\\
		&+ \left(\frac{\tsum{\texp{x}}}{1+\tsum{\texp{x}}}\right)^2 \cdot \Big( \tdiag{\texp{x}} - \texp{x}\texp{x}^\top/\tsum{\texp{x}} \Big),
	\end{align*}
	where the last matrix matches the template expression from Proposition~\ref{prop:template}, up to a positive prefactor if $y$ is instantiated by $\text{vector}(1)$ and $z$ by $\textrm{exp}(x)$. The first two matrices can be certified as psd directly from the positivity rules.
	
	
	\section{Conclusions}
	\label{sec:conclusions}
	
	We have presented the first generic implementation of the Hessian approach for certifying convexity and shown that it complements the well-established disciplined convex programming approach. Neither approach is better than the other. Both have complementary strengths and weaknesses. The DCP approach also works for non-differentiable functions but needs a new symbol for every new atom, whereas our implementation of the Hessian approach works on a formal language that is close to natural problem formulations in textbooks and rich enough to express many classical machine learning problems, including problems not found in standard libraries like scikit-learn~\citep{scikit-learn}. Furthermore, new DCP atoms also need to be certified at some point, and the Hessian approach can be used to certify some of these new atoms. 
	
	\paragraph{Acknowledgments} This work was supported by the Carl Zeiss Foundation within the project “Interactive Inference” and by the Ministry for Economics, Sciences and Digital Society of Thuringia (TMWWDG) under the framework of the Landesprogramm ProDigital (DigLeben-5575/10-9).
	
	\nocite{LaueBG22}
	
	\bibliographystyle{named}
	\bibliography{convcert}
	
	\newpage
	\appendix
	

	\section{Grammar for mathematical expressions}
	
	The formal language for mathematical expressions to which our certification algorithm is applied is specified by the grammar depicted in Figure~\ref{fig:grammar}. The language is rich enough to cover all the examples in the main paper and this supplement. 
	
	\setlength{\grammarindent}{6.5em}
	\setlength{\grammarparsep}{5pt plus 10pt minus 5pt}
	\begin{figure}[hb!]
		\begin{grammar}
			<expr> ::= <term> | <term> + <term> | <term> - <term>
			
			<term> ::= [-] <factor> | <factor> [.] * <factor> | <factor> [.] $/$ <factor>
			
			<factor> ::= <atom> ['] [[.] \^{} <factor>]
			
			<atom> ::= number | function ( <expr> ) | variable
		\end{grammar}
		\caption{EBNF Grammar for mathematical expressions supported by our approach. In this grammar, \emph{number} is a placeholder for an arbitrary floating point number, \emph{variable} is a placeholder for variable names starting with a Latin character and \emph{function} is a placeholder for the supported elementary differentiable functions like $\exp, \log$ and $sum$. Here, $'$ is used for transposition and a preceding $.$ introduces an elementwise operation.}
		\label{fig:grammar}
	\end{figure}
	
	Here are some examples from the language (the fist example uses a transposition and the fifth and seventh example use elementwise operations):
	
	\begin{description}
		\item[2-norm $\|Xw-y \|^2$:] \quad\texttt{(X*w-y)'*(X*w-y)}
		\item[logistic $\log(1+\exp(x))$:] \quad\texttt{log(1+exp(x))}
		\item[quadratic $x^2$:] \quad\texttt{x\textasciicircum 2}
		\item[relative entropy $x\log (x/y)$:] \quad\texttt{x*log(x/y), x>0, y>0}
		\item[logistic regression $\sum_{i=1}^n \log (\exp(y_i\cdot x_i^\top w)+1)$:] \quad\texttt{sum(log(exp(-y.*(X*w))+vector(1)))}
		\item[inverse product $(\prod_{i=1}^n x_i)^{-1}$:] \quad\texttt{1/exp(sum(log(x)))}
		\item[harmonic mean $1/((\sum_{i=1}^n 1/x_i)/n)$:] \quad\texttt{n/sum(x.\textasciicircum(-1)),n>0,x>0}
	\end{description}

	\section{Algorithmic details of the Hessian approach}
	
	Our implementation of the Hessian approach works on vectorized and normalized expression DAGs (directed acyclic graphs) for Hessians that contain every subexpression exactly once. Our formal input language is vectorized, which means expressions are expressed without indices but not normalized. Using a matrix calculus implementation, we can compute vectorized Hessians that are again not normalized. Therefore, we need to normalize such expressions.
	
	We illustrate the normalization on the expression \texttt{x*A'*x + exp(x*A'*X)}, which represents the function $xA^\top x+ \exp(xA^\top x)$. First, we parse the expression into a standard expression tree. The expression tree for our example is shown in Figure~\ref{expr:tree}.
	
	\begin{figure}[htb]
		\centering
		\begin{tikzpicture}[main/.style = {draw, circle, minimum size=23pt, line width=2pt}] 
			\node[main] (0) at (0,0) {$+$};
			\node[main] (1) at (-2,-1) {$*$};
			\node[main] (2) at (-3,-2) {$\top$};
			\node[main] (3) at (-4,-3) {$x$};
			\node[main] (4) at (-1,-2) {$*$};
			\node[main] (5) at (-2,-3) {$A$};
			\node[main] (6) at (0,-3) {$x$};
			\node[main] (7) at (2,-1) {$\exp$};
			\node[main] (8) at (3,-2) {$*$};
			\node[main] (9) at (2,-3) {$\top$};
			\node[main] (10) at (1,-4) {$x$};
			\node[main] (11) at (4,-3) {$*$};
			\node[main] (12) at (3,-4) {$A$};
			\node[main] (13) at (5,-4) {$x$};
			
			\draw [->] (0) -- (1);
			\draw [->] (1) -- (2);
			\draw [->] (1) -- (4);
			\draw [->] (2) -- (3);
			\draw [->] (4) -- (5);
			\draw [->] (4) -- (6);
			\draw [->] (0) -- (7);
			\draw [->] (7) -- (8);
			\draw [->] (8) -- (9);
			\draw [->] (9) -- (10);
			\draw [->] (8) -- (11);
			\draw [->] (11) -- (12);
			\draw [->] (11) -- (13);
		\end{tikzpicture}
		\caption{Expression tree for expression \texttt{x*A'*x + exp(x*A'*X)}.}\label{expr:tree}
	\end{figure}
	
	Using a common subexpression elimination [Aho et al.: Compilers: Principles, Techniques, and Tools, 2013] we identify the common subexpressions \texttt{$x$, $x^\top$, $A$, $Ax$, $x^\top Ax$} that appear more than once. Rearranging these subexpressions in the expression tree results in the normalized DAG that is shown in Figure~\ref{expr:dag}. Every node in the normalized DAG represents a unique subexpression.
	
	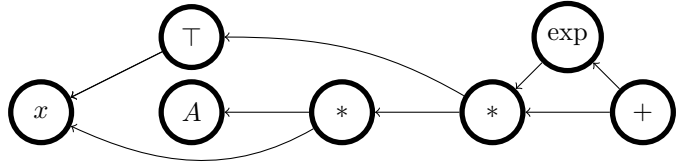
\begin{figure}[htb]
		\centering
		\begin{tikzpicture}[main/.style = {draw, circle, minimum size=23pt, line width=2pt}] 
			\node[main] (0) at (0,0) {$+$};
			\node[main] (1) at (-1,1) {$\exp$};
			\node[main] (2) at (-2,0) {$*$};
			\node[main] (3) at (-4,0) {$*$};
			\node[main] (4) at (-6,1) {$\top$};
			\node[main] (5) at (-6,0) {$A$};
			\node[main] (6) at (-8,0) {$x$};
			
			\draw [->] (0) -- (1);
			\draw [->] (0) -- (2);
			\draw [->] (1) -- (2);
			\draw [->] (2) -- (3);
			\draw [->] (2) to[out=150, in=0] (4);
			\draw [->] (3) -- (5);
			\draw [->] (4) -- (6);
			\draw [->] (4) -- (6);
			\draw [->] (3) to[out=210, in=-25] (6);
			
		\end{tikzpicture}
		\caption{Normalized expression DAG after common subexpression elimination of the subexpressions \texttt{x*A'*x + exp(x*A'*X)}. The leaves of the DAG are the nodes labeled by the variable $x$ and the parameter matrix $A$, respectively. The root of the tree is the right most node (labeled by $+$) and represents the whole expression.}\label{expr:dag}
	\end{figure}
	
	The normalized DAG then serves as the data structure on which the Hessian approach for certifying convexity operates. The Hessian approach traverses the DAG in post-order and attempts to label the nodes (subexpressions) of the DAG . This way, positivity information is propagated from the leaves of the DAG to the root.

	\section{CVX atoms}
	
	Due to space constraints, we could not discuss all differentiable CVX atoms in the main paper. Here, we show that also the remaining atoms can be certified as convex by our implementation of the Hessian approach. We start with standard univariate functions before we discuss the remaining multivariate functions.
	
	\paragraph{Univariate functions.}
	
	CVX's univariate DCP atoms are standard convex univariate functions like \texttt{exp}, \texttt{neg\_log}, \texttt{neg\_sqrt}, and \texttt{square}. For certifying these atoms as convex, the Hessian approach makes use of the information that  $\exp(x) >0$, that $\log (x)$ implies $x>0$, and that $\sqrt{x}$ implies $x\geq 0$. This works analogously for the atom $\texttt{log1p}(x) = \log(1+x)$. We have already discussed power functions and the negative entropy function \texttt{neg\_entr} in the examples (see main paper). The atom \texttt{inv\_pos} that encodes the function $1/x$ for $x>0$ can be certified as convex from its Hessian by using the constraint $x>0$.
	
	\paragraph{Remaining multivariate functions.}
	
	The only remaining twice differentiable multivariate functions are $\texttt{sum}(x) = \sum_{i=1}^n x_i$ and $\texttt{sum_squares}(x) = \sum_{i=1}^n x_i^2$, which are straightforwardly verified by the Hessian approach.
	
	\paragraph{Exceptions.}
	
	Similar, and related to the negative entropy function, are the relative entropy (\texttt{rel\_entropy}) $x\log (x/y)$ and the Kullback-Leibler divergence (\texttt{kl\_div}) $x\log (x/y)-x+y$ for $y>0$. Together with the atom $\texttt{quad_over_lin}(x,y) = \sum_{i=1}^n x_i^2/y$ for $y>0$, these are the only twice differentiable atoms not tractable with our approach, as they cannot be expressed with a single vector input.
	
	\section{More examples of convex functions not covered by CVX}
	
	In the last section we saw that the expression template derived in the main paper suffices to certify all twice differentiable atoms with vector input of CVX. Here, we collect a few more examples of convex functions that can be certified by the Hessian approach using the expression template but are not feasible for DCP.
	
	\subsection{Examples matched by the basic expression template}
	
	The following examples of multivariate, vectorized functions can be matched to the expression template from Proposition~2 (main text).
	
	\begin{enumerate}
		\item[(1)] \[
		f(x) = \sqrt{\sum_{i=1}^n \cosh(x_i)} \cdot \log\Big(\sum_{i=1}^n \cosh(x_i)\Big) = \tsqrt{\tsum{\tcosh{x}}}\tlog{\tsum{\tcosh{x}}}
		\]
		with its Hessian
		\begin{align*}
			& \frac{1}{2\sqrt{\tsum{\tcosh{x}}}} \left( \frac{\tlog{\tsum{\tcosh{x}}}}{2} + 1 \right) \\ & \Big( \tdiag{1 \oslash \tcosh{x} + \tcosh{x}} + \template \Big) \\ &+ \frac{\textrm{sinh}(x)\textrm{sinh}(x)^\top}{2 \,\tsum{\tcosh{x}}^{3/2}},
		\end{align*}
		where $y = \textrm{sinh}(x) \oslash \tcosh{x}$ and $z=\tcosh{x}$.
	\end{enumerate}
	The next examples require a restriction of the domain by the user.
	\begin{enumerate}
		\item[(2)]
		\[
		f(x) = \|x\|_2\cdot \log \|x\|_2 = \textrm{norm2}(x)\textrm{log}\big(\textrm{norm2}(x)\big) = \textrm{sum}(x\odot x)^{1/ 2}\textrm{log}\Big(\textrm{sum}(x\odot x)^{1/ 2}\Big)
		\]
		with $\|x\|_2 \geq 1$ and Hessian
		\[
		\frac{1}{\textrm{norm2}(x)}\textrm{diag}(1) + \frac{\textrm{log}\big(\textrm{norm2}(x)\big)}{\textrm{norm2}(x)} \left( \template \right),
		\]
		where $y = \text{vector}(1) \oslash x$ and $z=x\odot x$.
		
		\item[(3)]
		\[
		f(x) = \sqrt{\sum_{i=1}^n \exp(x_i)} \cdot \log\Big(\sum_{i=1}^n \exp(x_i)\Big) = \tsqrt{\tsum{\texp{x}}}\tlog{\tsum{\texp{x}}}
		\]
		with $\sum_{i=1}^n \exp(x_i)\geq 1$ and Hessian
		\begin{align*}
			& \frac{1}{\sqrt{\tsum{\texp{x}}}} \left( \frac{\tlog{\tsum{\texp{x}}}}{4} + 1 \right) \tdiag{\texp{x}} \\ + \, & \frac{\tlog{\tsum{\texp{x}}}}{4\sqrt{\tsum{\texp{x}}}} \Big(  \template \Big),
		\end{align*}
		where $y=\textrm{vector}(1)$ and $z=\texp{x}$.
		
		\item[(4)]
		\[
		f(x) = \Big(1+\sum_{i=1}^n \exp(x_i)\Big)\log\Big(\sum_{i=1}^n \exp(x_i)\Big) = (1+\tsum{\texp{x}})\tlog{\tsum{\texp{x}}}
		\]
		with $\sum_{i=1}^n \exp(x_i)\geq 1$ and Hessian
		\begin{align*}
			& \tlog{\tsum{\texp{x}}} \, \tdiag{\texp{x}} + \frac{2\texp{x}\texp{x}^\top}{\tsum{\texp{x}}} \\  + \, & \frac{1+\tsum{\texp{x}}}{\tsum{\texp{x}}} \Big(  \template \Big),
		\end{align*}
		where $y=\textrm{vector}(1)$ and $z=\texp{x}$.
	\end{enumerate}
	
	\subsection{Examples without expression template}
	
	There are also examples, including univariate functions, which do not require the use of an expression template but can be verified directly.
	
	\begin{enumerate}
		\item[(1)]
		\[
		f(x) = \Big(\sum_{i=1}^n \cosh(x_i)\Big)\log\Big(\sum_{i=1}^n \cosh(x_i)\Big) = \tsum{\tcosh{x}}\tlog{\tsum{\tcosh{x}}}
		\]
		with its Hessian
		\[
		\Big(\tlog{\tsum{\tcosh{x}}}+1\Big)\tdiag{\tcosh{x}} + \frac{\textrm{sinh}(x)\textrm{sinh}(x)^\top}{\tsum{\tcosh{x}}}. 	
		\]
	\end{enumerate}
	The following examples need a restriction of the domain by the user.
	\begin{enumerate}
		\item[(2)]
		\[
		f(x) = \sum_{i=1}^n \exp(x_i)\log(x_i) = \tsum{\texp{x}\odot \tlog{x}}
		\]
		with $x \geq \textrm{vector}(1)$ and Hessian
		\[
		\textrm{diag}\Big(\texp{x}\odot\tlog{x}\odot \big(\textrm{vector}(1) + \textrm{vector}(1) \oslash x - (\textrm{vector}(1) \oslash x)\odot(\textrm{vector}(1) \oslash x)\big)\Big).
		\]
		Note that it is psd, as $\textrm{vector}(1) \oslash x\in [0,1]^n$ and the function $z \mapsto z-z^2$ is non-negative on $[0,1]$.
		
		\item[(3)]
		\[
		f(x) = \sum_{i=1}^n x_i \log(1+\exp(x_i)) = \tsum{x\odot \tlog{\textrm{vector}(1)+\texp{x}}}
		\]
		with $x\in \mathbb{R}^n_{\geq 0}$ and Hessian
		\begin{align*}
			&\textrm{diag}\left(\frac{2\texp{x}}{1+\texp{x}}\right) \\ + \, & \textrm{diag}\left(x\odot \left(\frac{\texp{x}}{1+\texp{x}} - \frac{\texp{x}}{1+\texp{x}} \odot \frac{\texp{x}}{1+\texp{x}}\right)\right).  
		\end{align*}
		Here, the second term is psd by the same argument as above.
		
		\item[(4)]
		\[
		f(x) = \sum_{i=1}^n \exp(x_i)\log(\cosh(x_i)) = \tsum{\texp{x}\odot \tlog{\tcosh{x}}}
		\]
		with $x \in \mathbb{R}_{\geq 0}^n$ and Hessian
		\[
		\textrm{diag}\Big( \texp{x} \odot \Big( \tlog{\tcosh{x}} + 2\textrm{sinh}(x)\oslash \tcosh{x} + \textrm{vector}(1)\oslash (\tcosh{x}\odot \tcosh{x}) \Big) \Big).
		\]
		
		\item[(5)]
		\[
		f(x) = \sum_{i=1}^n \exp(x_i)-\frac{1}{4}\exp(2x_i) = \tsum{\texp{x} - 1/4 \texp{2x}}
		\]
		with Hessian $\tdiag{\texp{x}-\texp{2x}}$, which is psd for $x\in \mathbb{R}_{\leq 0}^n$ and nsd for $x\in \mathbb{R}_{\geq 0}^n$.
		
		\item[(6)]
		\[
		f(x) = \Big(\sum_{i=1}^n \exp(x_i)\Big)\log\Big(\sum_{i=1}^n \exp(x_i)\Big) = \tsum{\texp{x}}\tlog{\tsum{\texp{x}}}
		\]
		with $\sum_{i=1}^n \exp(x_i)\geq \exp(-1)$. Its Hessian reads
		\[
		\big(\tlog{\tsum{\texp{x}}}+1\big) \,\tdiag{\texp{x}} + \frac{\texp{x}\texp{x}^\top}{\tsum{\texp{x}}}.
		\]
	\end{enumerate}
	
	Here are some univariate functions which can be classified by the Hessian approach once the domain is restricted:
	
	\begin{enumerate}
		\item[(1)] $f(x) = x^a \textrm{exp}(x)$ for $0<a<1$ and $x\geq 1$ or $a<0$ and $x>0$. Note that for $a\geq 1$, $x^a\textrm{exp}(x)$ can be expressed as $\frac{1}{a^a}\left(\textrm{xexp}(x/a)\right)^a$ with the atom $\textrm{xexp}(x)=x\cdot \textrm{exp}(x)$.
		
		\item[(2)] $f(x) = x^b\log(x)$ for $b>1$ and $x\geq 1$.
		
		\item[(3)] $f(x) = x \cosh(x)$ for $x\geq 0$.
	\end{enumerate}
	
	
	
	\subsection{Generalized psd expression template}
	
	To support even more convex functions, we can generalize the expression template as shown in Figure~\ref{fig:template2}. 
	
	\setcounter{proposition}{2}
	\begin{proposition}
		\label{prop:generictemplate}
		For $y\in \mathbb{R}^n$, $z\in\mathbb{R}^n_{\geq 0}$, $a\geq 1$ and $b\geq 0$, all matrices of the following form are psd:
		\[
		\textrm{diag}(y\odot z\odot y) - (y\odot z)(y\odot z)^\top /(a(b+\textrm{sum}(z))).
		\]
	\end{proposition}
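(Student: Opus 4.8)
The plan is to follow the strategy of the proof of Proposition~\ref{prop:template}: we show that $v^\top A v \geq 0$ for every $v\in\mathbb{R}^n$, where $A$ denotes the matrix in the statement, and we reduce the claim back to Proposition~\ref{prop:template} by a one-line monotonicity argument for the denominator.

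First I would expand the quadratic form entrywise, exactly as in the proof of Proposition~\ref{prop:template}. Writing out the diagonal and rank-one parts gives
\[
v^\top\Big(\tdiag{y\odot z\odot y} - (y\odot z)(y\odot z)^\top/(a(b+\tsum{z}))\Big)v = \tsum{z\odot (y\odot v)^2} - \tsum{z\odot (y\odot v)}^2/(a(b+\tsum{z})).
\]
Now I would invoke the hypotheses: since $a\geq 1$, $b\geq 0$, and $z\in\mathbb{R}^n_{\geq 0}$, we have $a(b+\tsum{z}) = ab + a\,\tsum{z}\geq \tsum{z}$. The subtracted term $\tsum{z\odot (y\odot v)}^2$ is a square, hence non-negative, so enlarging its denominator can only decrease what is subtracted. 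Consequently, when $\tsum{z}>0$,
\[
v^\top A v \;\geq\; \tsum{z\odot (y\odot v)^2} - \tsum{z\odot (y\odot v)}^2/\tsum{z} \;=\; v^\top\big(\template\big)v \;\geq\; 0,
\]
where the final inequality is Proposition~\ref{prop:template} applied with the same $y$ and $z$. Equivalently, one may observe directly that this lower bound equals $\tsum{z}\cdot\textrm{var}(V_y)\geq 0$ for the random variable $V_y$ of that proof, using $\tsum{z}/(a(b+\tsum{z}))\leq 1$.

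Finally I would handle the degenerate case $\tsum{z}=0$: then $z=0$ since $z\geq 0$, so $y\odot z = 0$ and $\tdiag{y\odot z\odot y}=0$, whence $A$ is the zero matrix, which is trivially psd; here the $0/0$ second term is read as $0$. I do not expect a genuine obstacle: the only points requiring care are this boundary behavior of the denominator and checking that $a(b+\tsum{z})\geq 0$, so that the displayed rearrangements are legitimate. The substance of the argument is the denominator comparison $a(b+\tsum{z})\geq\tsum{z}$, which immediately hands the remaining work back to Proposition~\ref{prop:template}.
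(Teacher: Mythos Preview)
Your proof is correct. Both you and the paper reduce to Proposition~\ref{prop:template} through the same key observation $a(b+\tsum{z})\geq\tsum{z}$, but the mechanics differ. The paper works at the matrix level, rewriting
\[
A \;=\; \frac{a-1}{a}\,\tdiag{y\odot z\odot y} \;+\; \frac{1}{a}\left(\frac{b}{b+\tsum{z}}\,\tdiag{y\odot z\odot y} \;+\; \frac{\tsum{z}}{b+\tsum{z}}\,\Big(\template\Big)\right),
\]
so that $A$ is exhibited explicitly as a non-negative combination of psd matrices, the last of which is the Proposition~\ref{prop:template} template. You instead expand $v^\top A v$ and use a single scalar inequality to bound it below by the Proposition~\ref{prop:template} quadratic form. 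Your argument is shorter and arguably more transparent; the paper's explicit decomposition has the advantage, in the context of the paper, of staying within the positivity-rule framework (each summand is certifiable by Rules~1--4 plus the template), which is what the template-matching algorithm needs. Your handling of the degenerate case $\tsum{z}=0$ is a useful addition that the paper leaves implicit.
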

	\begin{proof}
		We can use the template expression from Proposition~2 (main text) to show that the above expression is also psd. To do so, we split the first summand of
		\begin{align*}
			&\textrm{diag}(y\odot z\odot y) - \frac{(y\odot z)(y\odot z)^\top}{a(b+\textrm{sum}(z))} \\
			&\qquad\qquad = \frac{a-1}{a} \, \textrm{diag}(y\odot z\odot y) + \frac{1}{a} \left( \textrm{diag}(y\odot z\odot y) - \frac{(y\odot z)(y\odot z)^\top}{b+\textrm{sum}(z)} \right)\\
			&\qquad\qquad = \frac{a-1}{a} \, \textrm{diag}(y\odot z\odot y) + \frac{1}{a} \left( \frac{b}{b+\textrm{sum}(z)} \, \textrm{diag}(y\odot z\odot y) \right. \\ 
			&\qquad\qquad +\left.\frac{\textrm{sum}(z)}{b+\textrm{sum}(z)} \left( \textrm{diag}(y\odot z\odot y) - (y\odot z)(y\odot z)^\top /\textrm{sum}(z) \right)\right),
		\end{align*}
		and note that the resulting first two summands are diagonal matrices with non-negative entries, hence psd, and the last summand was shown to be psd in Proposition~2 (main text).
	\end{proof}
	
	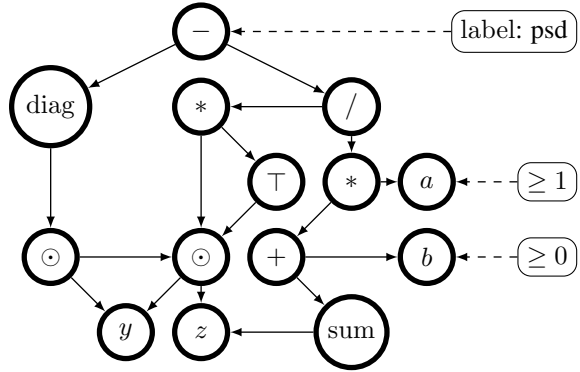
\begin{figure}[htb!]
		\centering
		\begin{tikzpicture}[main/.style={draw, circle, minimum size=20pt, line width=2pt}, m/.style={-latex, line width=0.5pt}] 
			\node[main] (1) at (2,0) {$-$};
			\node[main] (2) at (0,-1) {$\operatorname{diag}$};
			\node[main] (4) at (4,-1) {$/$};
			\node[main] (3) at (3,-2) {$\top$}; 
			\node[main] (5) at (2,-1) {$*$};
			\node[main] (6) at (2,-3) {$\odot$};
			\node[main] (7) at (0,-3) {$\odot$};
			\node[main] (8) at (2,-4) {$z$};
			\node[main] (9) at (1,-4) {$y$};
			\node[main] (10) at (4,-4) {$\sumop$};
			\node[main] (11) at (4, -2) {$*$};
			\node[main] (12) at (3, -3) {$+$};
			\node[main] (13) at (5, -2) {$a$};
			\node[main] (14) at (5, -3) {$b$};
			
			\node[rectangle, rounded corners=5, draw, anchor=east] (20) at (7, 0) {$\prop$:  psd};
			\draw[->, dashed, m] (20) -- (1);
			
			\node[rectangle, rounded corners=5, draw, anchor=east] (21) at (7, -2) {$\geq 1$};
			\draw[->, dashed, m] (21) -- (13);
			
			\node[rectangle, rounded corners=5, draw, anchor=east] (22) at (7, -3) {$\geq 0$};
			\draw[->, dashed, m] (22) -- (14);
			
			\draw[->, m] (1) -- (2);
			\draw[->, m] (1) -- (4);
			\draw[->, m] (4) -- (5);
			\draw[->, m] (5) -- (3);
			\draw[->, m] (5) -- (6);
			\draw[->, m] (3) -- (6);
			\draw[->, m] (2) -- (7);
			\draw[->, m] (7) -- (9);
			\draw[->, m] (7) -- (6);
			\draw[->, m] (6) -- (8);
			\draw[->, m] (6) -- (9);
			\draw[->, m] (10) -- (8);
			\draw[->, m] (4) -- (11);
			\draw[->, m] (11) -- (12);
			\draw[->, m] (11) -- (13);
			\draw[->, m] (12) -- (10);
			\draw[->, m] (12) -- (14);
		\end{tikzpicture}
		\caption{General template for determining the psd property of subtractions.}\label{fig:template2}
	\end{figure}
	
	In addition, the template allows a scaling and non-negative shifting of the $\sumop$ operation. Those additional nodes are considered in the template matching algorithm. Starting with the $/$ node, the algorithm checks if there is a $*$ node. If true, it checks for a child $a$ which is greater or equal to 1 and a $+$ child. If there is no $*$ node, the algorithm checks if there is a $+$ node, if true, it checks for the $\sumop$ and a non-negative child $b$ and continues as usual. Note that the scaling and shifting are optional. If there is neither a $*$ nor a $+$ node, the algorithm nevertheless finds a $\sumop$ node if the template matches. 
	
	Using the generalized expression template, the next example of a convex function can be certified directly without further rearrangements. Again, this example is not feasible with CVX's implementation of DCP and no restriction of the domain is necessary. The expression reads
	\[
	\sqrt{1+\sum_{i=1}^n \exp(x_i)}\cdot \log\Big(1+\sum_{i=1}^n \exp(x_i)\Big) = \tsqrt{1+\tsum{\texp{x}}}\tlog{1+\tsum{\texp{x}}},
	\]
	with its Hessian
	\begin{align*}
		& \frac{1}{\sqrt{1+\tsum{\texp{x}}}} \, \tdiag{\texp{x}} \\
		&\qquad\qquad + \frac{\textrm{log}\big(1+\tsum{\texp{x}}\big)}{2\sqrt{1+\tsum{\texp{x}}}} \Big( \tdiag{\texp{x}} - \texp{x}\texp{x}^\top/(2(1+\tsum{\texp{x}})) \Big),
	\end{align*}
	which matches the generalized expression template if $y$ is instantiated by $\textrm{vector}(1)$, $z$ by $\texp{x}$, $a$ by 2 and $b$ by 1.
	
	
\end{document}